    \newtheorem{prop}{Proposition}
    \newtheorem{thm}{Theorem}
    \newtheorem{defn}{Definition}
    \newtheorem{cor}{Corollary}
\theoremstyle{remark}
\title{Classifying matrix-valued holomorphic cross-sections over an annulus up to complete isometric isomorphism}
\author{Jacob Cornejo and Kathryn McCormick }
\address{Dept of Math and Stats, CSU Long Beach, Long Beach, CA 98040, USA}
\keywords{operator algebra, nonselfadjoint, homogeneous $C^*$-algebra, complete isometric isomorphism, Riemann surface, matrix bundle}
\subjclass[2020]{47L25 (46L07, 46M20)}
\begin{document}

\maketitle

\begin{abstract}
 We classify certain algebras of matrix-valued cross-sections over an annulus up to complete isometric isomorphism, based on topological bundle invariants. In particular, we study sections of matrix bundles which are continuous on the closure of the annulus and holomorphic on its interior. Our strategy includes exploiting the relationship between concomitants and modulus automorphic functions, as well as the classification of $n$-homogeneous $C^*$-algebras by Fell and Tomiyama-Takesaki. Furthermore, we describe a partial extension of our results over the annulus to larger classes of finitely and smoothly bordered planar domains.
    \end{abstract}

\section{Introduction} 
Let $R$ be a bounded domain in the complex plane with a boundary consisting of finitely many disjoint analytic Jordan curves. On $R$, we can define analogues of the disk algebra and of Hardy spaces $H^p(R)$ \cite{Rudin55}. In studying these algebras' function theory, invariant subspaces, and related operator theory, one naturally encounters algebras of multiple-valued functions on the disk which have single-valued modulus, i.e.~modulus-automorphic functions \cite{Sarason1965,Voichick1964}. These multiple-valued function algebras can also be realized as cross-sections of line bundles over $R$ \cite{AbrahamseDouglas76,Widom71}.

More specifically, in \cite{Sarason1965} Sarason describes the function theory and invariant subspaces of $H^p$ spaces on the annulus, and independently in \cite{Voichick1964} Voichick studies invariant subspaces for $H^2$ on finite Riemann surfaces. They attack these problems by lifting their functions on $R$ to the universal covering space of their Riemann surfaces, the disk. Their study of the invariant subspaces for modulus-automorphic functions on the disk has been followed up in work of many others, such as \cite{Hasumi1966}.

Sarason shows that the bounded linear operators on $H^2$-spaces of modulus-automorphic functions that are operators that commute with the shift are the operators that can be implemented by multiplication by bounded modulus automorphic functions. He then uses this result to describe when two shifts are unitarily equivalent \cite[Thm.~10, Cor.~1]{Sarason1965}. When one translates his work over to the line bundle perspective, we have a classification up to unitary equivalence of the shift operators, viewed as operators on sections of line bundles, based on a line bundle invariant. This second perspective is even more fully developed in the theory of bundle shifts in \cite{AbrahamseDouglas76}, where Abrahamse and Douglas now consider vector bundles over more general planar domains than the annulus. 

One can ask, though, instead of classifying individual shift operators based on a bundle invariant, can we classify operator algebras that contain shift operators based on a bundle invariant? In particular, we can study the algebra of cross-sections of an appropriately chosen matrix bundle, which can act on the cross-sections of a vector bundle.

The purpose of this paper is to classify, up to complete isometric isomorphism, the algebra of $M_n(\mathbb{C})$-valued cross-sections over the annulus which are continuous sections on the closed annulus and holomorphic sections on the interior. We show that these algebras are classified by a bundle invariant. We will also briefly describe how to generalize our results to finite and smoothly bordered domains in the complex plane in the special case where the associated bundle is determined by commuting unitary matrices. 

This work can also be seen as a  nonselfadjoint analogue to Fell \cite{Fell1961} and Tomiyama-Takesaki's \cite{TomiyamaTakesaki61} classification of $n$-homogeneous $C^*$-algebras, and we use this $n$-homogeneous classification as one ingredient in our proof. (The matrix section algebras are even \emph{completely isometrically $n$-subhomogeneous}, using the terminology of \cite{AHMR2022}.) These nonselfadjoint algebras also provide another family of unital operator algebras that are not uniform algebras and have the Bishop Property of \cite{ClouatreThompson2023} inside their natural $C^*$-superalgebra.

\section{Preliminaries}
We let $\mathbb{C}$ and $M_n(\mathbb{C})$ denote the complex numbers and $n \times n$ matrices over the complex numbers, respectively, and $I_n$ will denote the $n\times n$ identity matrix. Then $U_n(\mathbb{C})$ is the collection of unitary $n \times n$ matrices, and $PU_n(\mathbb{C})$ denotes the projective unitary group. Recall that the elements of $PU_n(\mathbb{C})$ can be identified with conjugation by a particular unitary matrix. Let $\text{Ad}(A): M_n(\mathbb{C}) \to M_n(\mathbb{C})$ denote the map $X \mapsto A X A^{-1}$; for $A \in U_n(\mathbb{C})$, this is equivalent to considering the map $X \mapsto A X A^*$ where $^*$ denotes the complex conjugate transpose. The elements of $PU_n(\mathbb{C})$ can be identified (non-uniquely) as $\text{Ad}(A)$ for some matrix $A \in U_n(\mathbb{C})$. We will use the notation $[n]:=\{1,2,\ldots, n\}$ for brevity.  

\subsection{Topological background} \label{subsection:topnot}

For most of the narrative, $R$ will specifically denote an open annulus in the complex plane with inner radius $r_0>0$ and outer radius $r_1>r_0$. Two open annuli are conformally equivalent if and only if they have the same ratio $r_1/r_0$, thus we assume throughout that $r_1>r_0=1$. In Section \ref{sec:GeneralizationToOtherDomains}, $R$ will be used to more generally denote a planar domain whose boundary $\partial R$ is piecewise-smooth and consists of finitely many disjoint simple closed curves with $\overline{R}=R \cup \partial R$; we will sometimes refer to a `general $R$' to indicate a result generalizes to this setting.

We let $A(\overline{R})$ denote the collection of functions $f : \overline{R} \to \mathbb{C}$ such that $f$ is continuous on $\overline{R}$ and holomorphic on $R$; we call $A(\overline{R})$ the algebra of continuous-holomorphic functions over $\overline{R}$. For general $R$, $A(\overline{R})$ is generated as a uniform algebra by the collection of rational functions with poles off of $\overline{R}$ \cite[Thm.~2.3]{Mergelyan54}.
The algebra of continuous $\mathbb{C}$-valued functions on $\overline{R}$  will be denoted $C(\overline{R})$. When $R$ is specifically an annulus, the algebras $C(\overline{R})$ and $A(\overline{R})$ were studied extensively in Sarason's thesis \cite{Sarason1965}. For general $R$, the algebras have been studied in, for example, \cite{Hasumi1966, Voichick1964,Stanton1981}.

We will build a noncommutative version of $A(\overline{R}) \subseteq C(\overline{R})$ by considering cross-sections of a matrix bundle $\mathcal{B}$ over $\overline{R}$, in such a way that the center of the algebra of continuous sections will be $C(\overline{R})I_n$. To accomplish this, we will ask that $\mathcal{B}$ be continuous vector bundle with matrix fibres $M_n(\mathbb{C})$. We also require that $\mathcal{B}$ has a holomorphic structure when restricted to the interior $R$, so that the center of the algebra of continuous sections holomorphic on the interior of the bundle is $A(\overline{R})I_n$.

For our algebra of continuous sections to be a $C^*$-algebra, we need to require that $\mathcal{B}$ is a bundle with transition functions which are $PU_n(\mathbb{C})$-valued. To ask that $\mathcal{B}$ have holomorphic structure on the interior and have projective unitary-valued transition functions, $\mathcal{B}$ must have locally constant transition functions -- in other words, we will assume that $\mathcal{B}$ is a flat bundle.

Therefore, we let $\mathcal{B}$ to be a flat $PU_n(\mathbb{C})$-bundle with matrix fibres, which is then determined by a representation $\rho$ of the fundamental group of $\overline{R}$ \cite{Gunning1967}. We sometimes write $\mathcal{B}=\mathcal{B}(\overline{R}, \rho)$ to denote the dependence on $\rho$. In our main case, where $\overline{R}$ is an annulus, $\pi_1(\overline{R}) \simeq \mathbb{Z}$, $\rho: \mathbb{Z} \to PU_n(\mathbb{C})$, and $\rho$ is completely determined by $\rho(1)$. Pick an open cover $\mathcal{O}$ of $\overline{R}$, and let $U,V \in \mathcal{O}$. The bundle $\mathcal{B}$ has transition functions $g_{UV}: U \cap V \to PU_n(\mathbb{C})$, where $g_{UV}(w) \in PU_n(\mathbb{C})$ acts on the fibres $M_n(\mathbb{C})$ by conjugating by powers of a fixed matrix in $U_n(\mathbb{C})$. 

In a particular category of bundles and bundle maps, we can ask that (1) all the bundles have the same base space and that the bundle maps fix the base space, or that (2) the bundle maps are allowed to transform the base space by a homeomorphism or (2') the bundle maps are allowed to transform the base space by a homeomorphism that is holomorphic on the interior. Suppose the two bundles in question have transition functions $g_{UV}$, $U,V \in \mathcal{O}$ and $h_{U'V'}$, $U',V' \in \mathcal{O'}$. In the first sense, a bundle equivalence is implemented by choosing a refinement $\mathcal{O''}$ of the open covers $\mathcal{O}, \mathcal{O'}$, and finding functions $j_{U''}: U'' \to PU_n(\mathbb{C})$ such that $h_{U''V''} = j_{U''} g_{U''V''} j_{V''}^{-1}$ for every $U'',V'' \in \mathcal{O}''$. In the second sense, a bundle equivalence is implemented in the same way but by additionally composing with a homeomorphism; or in the third sense, by additionally composing with a homeomorphism that is holomorphic on the interior.

If we say our bundle equivalence is a \emph{flat} equivalence of (flat) bundles $\mathcal{B}(\overline{R}, \rho)$ and $\mathcal{B}(\overline{S}, \tau)$, we mean that the functions $j_{U''}: U'' \to PU_n(\mathbb{C})$ implementing the equivalence can be chosen to be (locally) constant. From the representation perspective, this is equivalent to (1) requiring the representations $\rho$ and $\tau$ defining the bundles are $PU_n(\mathbb{C})$-conjugate and $\overline{R}=\overline{S}$, or (2) requiring that $\rho$ and $\tau$ are conjugate and that there is a homeomorphism between $\overline{R}$ and $\overline{S}$, or (2') requiring that $\rho$ and $\tau$ are conjugate and that there is a homeomorphism between $\overline{R}$ and $\overline{S}$ that is holomorphic on the interior. 

Fix representations $ \rho, \tau: \mathbb{Z} \to PU_n(\mathbb{C})$ that determine the flat bundles $\mathcal{B}(\overline{R}, \rho)$, $\mathcal{B}(\overline{R},\tau)$ over an annulus. These representations, in turn, are determined by matrices $A_\rho, A_\tau \in U_n(\mathbb{C})$ for which $\rho(1)= \text{Ad}(A_\rho)$ and $\tau(1)=\text{Ad}(A_\tau)$. The representations $\rho$ and $\tau$ will be $PU_n(\mathbb{C})$-conjugate if and only if the representative matrices $A_\rho, A_\tau$ are unitarily conjugate. 
	
If we restrict the bundle $\mathcal{B}=\mathcal{B}(\overline{R}, \rho)$ to $\partial R$, then we will have a continuous, flat bundle $\mathcal{B}(\overline{R}, \rho)|_{\partial R}$, and if we restrict to the interior of $\overline{R}$, we will have a holomorphic, flat bundle $\mathcal{B}(\overline{R}, \rho)|_{R}$. The restriction of a flat bundle over $\overline{R}$ to a bundle over $S \subseteq \overline{R}$ will also be a flat bundle by simply restricting the locally constant transition functions. However, its equivalence class as a bundle may be very different, as the domain of functions implementing bundle equivalence has been changed. Also, it somewhat complicates the perspective of representations of the fundamental group since the base space may no longer be connected. However, the restriction to boundary components or the interior is sufficiently nicely behaved in our setting to make some identifications. Choose a point $w_0 \in \text{int}(R)$, and points $w_1, w_2, \ldots, w_b \in \partial R$ for each boundary component. If we identify $\pi_1(\overline{R})=\pi_1(\overline{R},w_0)$, restricting to $\text{int}(R)$ we will get $\pi_1(\overline{R},w_0) \simeq \pi_1(R, w_0)$ in such a way that preserves the relationship between flat bundles and representations of the fundamental group. In the case of restricting to the boundary, it is certainly true that $\pi_1(\overline{R},w_0) \simeq \pi_1(\overline{R},w_i)$ ($i \not = 0$) via a choice of path from $w_0$ to $w_i$. Fix such a system of paths, and a representation of $\pi_1(\overline{R},w_0)$ coming from a flat bundle structure. If we restrict $\overline{R}$ to $\partial R$, and look at the fundamental groupoid $\Pi_1(\partial R, w_1, \ldots, w_b)$ of $\partial R$ restricted to the points $w_1, \ldots, w_b$, then representations of $\Pi_1(\partial R, w_1, \ldots, w_b)$ can be identified with a product of representations of each connected component. By choosing base points and restricting to $\partial R$ we induce a representation of $\Pi_1(\partial R):=\Pi_1(\partial R, w_1, \ldots, w_b)$ into $PU_n(\mathbb{C})$, which we call the \emph{$\rho$-induced representation}. For example, suppose $\overline{R}$ is the closed annulus and $\rho(1)=\text{Ad}(A)$ for some $A \in U_n(\mathbb{C})$. The fundamental groupoid of $\partial R$ restricted to two basepoints $w_1, w_2$ on the boundary circles is a trivial $\mathbb{Z}$-bundle over two points, and the $\rho$-induced representation can be identified with the homomorphism $\rho' : \mathbb{Z} \times \mathbb{Z} \to PU_n(\mathbb{C})$ satisfying $\rho'(1,0)=\rho'(0,1)=\text{Ad}(A)$. 

The following definitions clarify what we mean by flat equivalence for bundles that have been restricted to the boundary of $\overline{R}$. The goal will be to classify an algebra of cross-sections based on the bundle's equivalence class.

\begin{defn} Consider a flat $PU_n(\mathbb{C})$ bundle $\mathcal{B}(\overline{R}, \rho)$ over $\overline{R}$ and a flat $PU_n(\mathbb{C})$ bundle $\mathcal{B}(\partial R, \tau)$ over $\partial R$. 
\begin{enumerate}
    \item  The bundle $\mathcal{B}(\overline{R}, \rho)|_{\partial R}$ is called \emph{restricted flat equivalent to $\mathcal{B}(\partial R, \tau)$ in the sense (1)} if the bundles are flat equivalent, the base spaces are fixed by the bundle equivalence, and the $\rho$-induced representation of $\Pi_1(\partial R)$ and the representation $\tau$ of $\Pi_1(\partial R)$ are conjugate by a (single, fixed) matrix. 
    \item The bundle $\mathcal{B}(\overline{R}, \rho)|_{\partial R}$ is called \emph{restricted flat equivalent to $\mathcal{B}(\partial R, \tau)$ in the sense (2)} if the bundles are flat equivalent; the $\rho$-induced representation of $\Pi_1(\partial R)$ and the representation $\tau$ are conjugate by a (single, fixed) matrix; and there is a homeomorphism $\phi : \partial R \to \partial R$ in the bundle equivalence in sense (2) such that the homeomorphism is the restriction of a homeomorphism of $\overline{R}$ that is holomorphic on the interior $R$.
\end{enumerate}
    \end{defn}

\subsection{Functional analysis preliminaries} \label{subsection:funcprelim}
For references on operator spaces, completely bounded maps, operator algebras, and the $C^*$-envelope, see for example \cite{Paulsen2002,BlM2004}. We will also make use of the theory of $n$-homogeneous $C^*$-algebras, which can be found in \cite{TomiyamaTakesaki61,Fell1961}.

Recall $A(\overline{R})$ is used to denote the $\mathbb{C}$-valued continuous-holomorphic functions defined on $\overline{R}$ ,  $\Gamma_c(\overline{R},\mathcal{B}(\overline{R}))$ denote the $C^*$-algebra of continuous cross-sections of $\mathcal{B}(\overline{R})$, and $\Gamma_{h}(\overline{R},\mathcal{B}(\overline{R}))$ denote the holomorphic subalgebra of $\Gamma_c(\overline{R},\mathcal{B}(\overline{R}))$ where the sections are holomorphic on the interior of $\overline{R}$ and continuous on the boundary.

There are two ways we will view a cross-section $\sigma$ of the bundle $\mathcal{B}$. A continuous section $\sigma$ is defined locally by picking an open cover $\mathcal{O}$ of $\overline{R}$, and then $\sigma= (\sigma_U)_{U \in \mathcal{O}}$ where $\sigma_U : U \subseteq R \to M_n(\mathbb{C})$ is a continuous function and the ambiguities of defining $\sigma_U$ and $\sigma_V$ where $U \cap V$ is nonempty are resolved via the transition functions. However, since our bundle $\mathcal{B}$ is flat, we can also lift $\sigma$ to a well-defined function on the universal covering space of $\overline{R}$. We denote the universal covering space of $\overline{R}$ as $\widetilde{D}$, viewed as the union of an open disk $D$ with its boundary components, and for which the complement of the boundary forms a Cantor set in the boundary circle. Then $\sigma$ may be identified with a function $F_\sigma: \widetilde{D} \to M_n(\mathbb{C})$ that satisfies $F_\sigma( g \cdot x) = \rho(g) \cdot F_\sigma(x)$.  This perspective of $\sigma$ is useful for many of our arguments.

In either perspective, we can put a norm on the continuous sections by using the standard $C^*$-norm on the fibres $M_n(\mathbb{C})$ and taking $\| \sigma \|= \sup_{z \in \overline{R}} \| \sigma_U (z)\|= \sup_{w \in \tilde{D}} \| F_{\sigma}(w)\|$, which is well-defined. Since the collection of continuous sections of $\mathcal{B}$ is a $C^*$-algebra with respect to this norm and the standard involution, we can put the canonical operator space structure on $\Gamma_h(\overline{R},\mathcal{B}(\overline{R}))$. The collection of continuous sections that are holomorphic on the interior form a non-self-adjoint, closed subalgebra of the $C^*$-algebra, and inherit the superalgebra's operator space structure. See \cite{McCormick19} for more details. 

The algebra $A(\partial R) \subseteq C(\partial R)$ is the algebra of continuous functions on $\partial R$ that are restrictions of functions in $A(\overline{R})$, i.e.~$A(\partial R)$ consists of continuous functions that extend to be holomorphic on $R$. Since the norm is attained on the boundary, $A(\partial R) \simeq A(\overline{R})$. Similarly, $\Gamma_h(\partial R,\mathcal{B}(\overline{R})|_{\partial R}) \simeq \Gamma_h(\overline{R},\mathcal{B}(\overline{R}))$ \cite[Lem.~3.3]{McCormick19}. The algebra $A(\overline{R})$ can also be viewed as the uniform closure of rational functions with poles off of $R$ \cite{Kodama1965}.

\section{Results} \label{section:results}

The first proposition is true for general $R$:

\begin{prop} \label{propeasydirection} Suppose that $\mathcal{B}(\overline{R},\rho)|_{\partial R}$ is restricted flat $PU_n(\mathbb{C})$ equivalent to the bundle $\mathcal{B}(\overline{R},\tau)|_{\partial R}$ in the sense (2). Then $\Gamma_h(\overline{R},\mathcal{B}(\overline{R},\rho))$ is completely isometrically isomorphic to $\Gamma_h(\overline{R},\mathcal{B}(\overline{R},{\tau}))$. If the bundle equivalence fixes $\partial{R}$ pointwise (i.e.~is in the sense (1)), then the isomorphism of the section algebras pointwise-preserves the elements $f \in A(\overline{R})I_n$ in the center. 
    \end{prop}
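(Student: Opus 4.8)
The plan is to show that a flat equivalence of the restricted bundles induces an isomorphism of the section algebras by transporting cross-sections, and that this map is completely isometric because it is implemented fibrewise by conjugation by unitaries (and, in sense (2), precomposition by a biholomorphism of the interior that extends continuously to the boundary). First I would work on the universal-covering-space side: identify $\Gamma_h(\overline{R},\mathcal{B}(\overline{R},\rho))$ with the algebra of continuous functions $F\colon \widetilde{D}\to M_n(\mathbb{C})$, holomorphic on the interior disk, satisfying the modulus-automorphy relation $F(g\cdot x)=\mathrm{Ad}(A_\rho^{\,m(g)})F(x)$, where $m\colon \pi_1\to\mathbb{Z}$ records the deck transformation; the norm is the sup over $\widetilde{D}$. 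Since the hypothesis gives that the $\rho$-induced and $\tau$-induced representations of $\Pi_1(\partial R)$ are conjugate by a single fixed unitary $W\in U_n(\mathbb{C})$, and since by \cite[Lem.~3.3]{McCormick19} (cited in the excerpt) the boundary restriction is completely isometrically identified with the full algebra, it suffices to build the isomorphism at the level of these boundary data. Concretely, the map $F\mapsto \mathrm{Ad}(W)\circ F$ (composed, in sense (2), with the biholomorphism $\phi$ lifted to $\widetilde{D}$) sends functions automorphic for $\rho$ to functions automorphic for $\tau$: one checks $\mathrm{Ad}(W)F(g\cdot x)=\mathrm{Ad}(W)\mathrm{Ad}(A_\rho^{m(g)})F(x)=\mathrm{Ad}(WA_\rho^{m(g)}W^{-1})\,\mathrm{Ad}(W)F(x)=\mathrm{Ad}(A_\tau^{m(g)})\,(\mathrm{Ad}(W)F)(x)$, using exactly that $WA_\rho W^{-1}$ and $A_\tau$ represent the same class in $PU_n$ (so differ by a scalar, which dies under $\mathrm{Ad}$).

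Next I would verify the algebraic and metric properties. The map is clearly linear and multiplicative since $\mathrm{Ad}(W)$ is an algebra homomorphism on $M_n(\mathbb{C})$ fibrewise and precomposition by a homeomorphism is an algebra homomorphism on sections; it carries $I_n$ to $I_n$, so it is unital. For the norm: at each point $\mathrm{Ad}(W)$ is a $*$-isomorphism of $M_n(\mathbb{C})$, hence isometric, and $\phi$ is a bijection of $\widetilde{D}$ (hence of $\overline{R}$) so the sup-norm is preserved; amplifying to $M_k$ of the section algebra, the same fibrewise conjugation (now by $W\otimes I_k$, or rather $\mathrm{Ad}(W)$ applied entrywise) is still a $*$-isomorphism of $M_n(M_k(\mathbb{C}))\cong M_{nk}(\mathbb{C})$, so every amplification is isometric and the map is a complete isometry. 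The inverse is given by $\mathrm{Ad}(W^{-1})$ and $\phi^{-1}$, so the map is a surjective complete isometric isomorphism. In sense (1), $\phi=\mathrm{id}$ and one may take the conjugating unitary to be trivial on each boundary component pointwise, so central elements $fI_n$ with $f\in A(\overline{R})$ are fixed: $\mathrm{Ad}(W)(f(x)I_n)=f(x)I_n$, giving the last claim.

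The main obstacle I anticipate is bookkeeping at the interface between the three pictures — the local-trivialization (Čech) description, the covering-space description, and the fundamental-groupoid/representation description — and in particular making precise that a \emph{flat} bundle equivalence with locally constant $j_{U''}$ really does descend to a \emph{single} conjugating unitary $W$ on the relevant connected piece, rather than a locally constant $PU_n$-valued function that could vary between components of $\partial R$. Here is where the definition's insistence that the induced representations be conjugate ``by a (single, fixed) matrix'' is doing real work: on each connected boundary component the flat equivalence forces the locally constant transition adjustment to be globally constant, and the extra hypothesis synchronizes the choices across components and lifts the projective-unitary ambiguity to an honest unitary. A secondary subtlety, only in sense (2), is checking that the biholomorphism $\phi$ of $\overline{R}$ genuinely lifts to the universal cover compatibly with the deck action and with $W$ — but this follows from functoriality of the covering-space construction under maps of the base that respect the holomorphic structure, together with the fact (noted in the excerpt) that such a $\phi$ is the boundary trace of an interior biholomorphism, so it interacts well with $\Gamma_h$.
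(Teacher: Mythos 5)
Your proposal is correct and follows essentially the same route as the paper: reduce to the boundary via the restriction isomorphism, pass to concomitants on the universal cover, and implement the isomorphism by conjugating with the single fixed unitary composed with the lifted boundary homeomorphism, observing that fibrewise unitary conjugation is isometric at every matrix level and that $\mathrm{Ad}(W)$ fixes scalar multiples of $I_n$ for the sense-(1) claim. The only cosmetic difference is the direction of the map (the paper sends $\tau$-concomitants to $\rho$-concomitants via $F_1\mapsto V F_1(\tilde\varphi(\cdot))V^{-1}$), which is just the inverse of yours.
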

\begin{proof} For the first claim, it's enough to show that $\Gamma_h(\partial{R},\mathcal{B}(\overline{R},\rho)|_{\partial R}) \simeq \Gamma_h(\partial{R},\mathcal{B}(\overline{R},{\tau})|_{\partial R})$, since the algebra $\Gamma_h(\overline{R},\mathcal{B}(\overline{R},\rho)) \simeq \Gamma_h(\partial{R},\mathcal{B}(\overline{R},\rho)|_{\partial R}) $ via the restriction map.

Suppose $\mathcal{B}(\overline{R},\rho)|_{\partial R}$ is restricted flat $PU_n(\mathbb{C})$ equivalent to the bundle $\mathcal{B}(\overline{R},\tau)|_{\partial R}$ in sense (2), then there is a homeomorphism $\varphi: \partial{R} \to \partial{R}$ and a matrix $V\in U_n(\mathbb{C})$ so that $\tau(k) = V^{-1} \rho(k) V$ for all $k \in \pi_1(\overline{R})$. Let $\tilde{\varphi}$ be the induced map by $\varphi$ on the universal covers of $\partial{R}$ and $\varphi(\partial{R})$ which satisfies $\tilde{\varphi}(k \cdot z)= k \cdot \tilde{\varphi}(z)$.

View $\Gamma_h(\partial{R},\mathcal{B}(\overline{R},\rho)|_{\partial R})$ as the collection of restrictions of (continuous holomorphic) concomitants $F: \partial\widetilde{D} \to M_n(\mathbb{C})$ satisfying $F(k \cdot z)=\rho(k) \cdot F(z)$ for every $k \in \pi_1(\overline{R})$ and $z \in \partial\widetilde{D}$. Also view $\Gamma_h(\partial R,\mathcal{B}(\overline{R},\tau)|_{\partial R})$ as the collection of concomitants $F_1: \tilde{\varphi}(\partial\widetilde{D}) \to M_n(\mathbb{C})$ satisfying $F_1(k \cdot z_1)=\tau(k) \cdot F_1(z_1)$ for every $z_1 \in \tilde{\varphi}(\partial\widetilde{D})$, or in other words, for every $z \in \partial\widetilde{D}$,

\begin{align*}F_1(\tilde{\varphi}(k \cdot z)) & =\tau(k) \cdot F_1(\tilde{\varphi}(z)) \\
& = V^{-1} \rho(k) V \cdot F_1(\tilde{\varphi}(z)) \\
& = V^{-1} (\rho(k) \cdot (V F_1(\tilde{\varphi}(z)) V^{-1} ) )V .
\end{align*}

So, $V(F_1(\tilde{\varphi}(\cdot)))V^{-1} : \partial\tilde{D} \to M_n(\mathbb{C})$ is a concomitant with respect to $\rho$. Since $\varphi$ is the restriction of a holomorphic homeomorphism of $\overline{R}$, we conclude $V(F_1(\tilde{\varphi}(\cdot)))V^{-1}$ belongs to $\Gamma_h(\partial R, \mathcal{B}(\overline{R},\rho)|_{\partial R})$. Moreover, the norm of $V(F_1(\tilde{\varphi}(\cdot)))V^{-1}$ at any matrix level is equal to the norm of $F_1(\tilde{\varphi}(\cdot))$. Therefore, the map $F_1 \mapsto V(F_1(\tilde{\varphi}(\cdot)))V^{-1}$ a complete isometry, and which is also quickly seen to be an isomorphism of $\Gamma_h(\partial R, \mathcal{B}(\overline{R},\tau)|_{\partial R})$ with $\Gamma_h(\partial R, \mathcal{B}(\partial R, \overline{R},\rho)|_{\partial R})$.

Suppose that in addition, $\tilde{\varphi}(z)=z$. Then the isomorphism is defined as 
\[F_1 \mapsto V(F_1(\cdot))V^{-1}\]
If we use $F_1(z)=f(z)I_n$ for some $f \in A(\partial{R})$, then we will have
\[f(z)I_n \mapsto V(f(z)I_n)V^{-1}= f(z)I_n. \]
    \end{proof}

Theorem \ref{mainthm} will follow from the statement:

\begin{prop}\label{mainthmmod} Let $R$ be an annulus. Suppose that $\Gamma_h(\overline{R}, \mathcal{B}(\overline{R}, \rho))$ is completely isometrically isomorphic to $\Gamma_h(\overline{R}, \mathcal{B}(\overline{R}, \tau))$ and that the isomorphism takes $f \in A(\overline{R})I_n \subseteq \Gamma_h(\overline{R}, \mathcal{B}(\overline{R}, \rho)) \mapsto f \in A(\overline{R})I_n \subseteq \Gamma_h(\overline{R}, \mathcal{B}(\overline{R}, \tau))$ pointwise. Then $\mathcal{B}(\overline{R}, \rho)|_{\partial R}$ is restricted flat $PU_n(\mathbb{C})$ bundle equivalent to $\mathcal{B}(\overline{R},\tau)|_{\partial R}$ such that the bundle equivalence fixes $\partial {R}$ pointwise.
    \end{prop}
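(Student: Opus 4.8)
The plan is to pass to $C^*$-envelopes to extract a \emph{unitary} gauge transformation on $\partial R$, then use that the nonselfadjoint algebras (not merely their envelopes) are being matched in order to continue that gauge holomorphically across the annulus with unitary boundary values on \emph{both} boundary circles, and finally to run a maximum-principle and totally-real rigidity argument forcing the gauge to be a single constant unitary $V$ — which is exactly a single-matrix conjugacy $\text{Ad}(V)\,\text{Ad}(A_\rho)\,\text{Ad}(V)^{-1}=\text{Ad}(A_\tau)$. To begin: a unital complete isometric isomorphism extends to a $*$-isomorphism $\widetilde\Phi$ of the $C^*$-envelopes, which by \cite{McCormick19} are the $n$-homogeneous $C^*$-algebras $\Gamma_c(\partial R,\mathcal B(\overline R,\rho)|_{\partial R})$ and $\Gamma_c(\partial R,\mathcal B(\overline R,\tau)|_{\partial R})$. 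Since $A(\overline R)|_{\partial R}$ generates $C(\partial R)$ as a $C^*$-algebra and $\Phi$ fixes $A(\overline R)I_n$ pointwise, $\widetilde\Phi$ fixes $C(\partial R)I_n$ pointwise; in particular it fixes the two central projections onto the components $C_{\mathrm{in}},C_{\mathrm{out}}$ of $\partial R$, so $\widetilde\Phi$ splits over these two circles. By the Fell and Tomiyama--Takesaki classification, each summand of $\widetilde\Phi$ is implemented by a $PU_n(\mathbb{C})$-bundle isomorphism covering the identity of the corresponding circle; passing to the universal cover $S$ realized as a strip $\{0<\operatorname{Im} w< L\}$ with $R=S/(w\sim w+2\pi)$ and $C_{\mathrm{in}},C_{\mathrm{out}}$ its two boundary lines, and using the natural de-twisting trivializations $\Gamma_c(C_i,\mathcal B(\overline R,\rho)|_{C_i})\cong C(C_i,M_n(\mathbb{C}))$ given by $\text{Ad}(A_\rho^{-w/2\pi})$ (with $A_\rho^{w/2\pi}:=\exp(\tfrac{w}{2\pi}\log A_\rho)$, entire in $w$; likewise for $\tau$), this says each summand of $\widetilde\Phi$ is conjugation by a continuous function $\mathbf W^{\mathrm{in}},\mathbf W^{\mathrm{out}}$ into $U_n(\mathbb{C})$ — honestly unitary because $\widetilde\Phi$ is $*$-preserving.

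The heart of the matter is to use that $\widetilde\Phi$ carries $\Gamma_h(\overline R,\mathcal B(\overline R,\rho))$ \emph{onto} $\Gamma_h(\overline R,\mathcal B(\overline R,\tau))$. Realizing $\Gamma_h$ as the twisted-periodic holomorphic concomitants $F$ on $S$ (with $F(w+2\pi)=\text{Ad}(A_\rho)F(w)$), the restrictions to the two boundary lines identify $\Gamma_h$ with the compatible pairs of boundary data, where on the de-twisted Fourier side compatibility is simply $\widehat{f_{\mathrm{out}}}(m)=e^{-mL}\widehat{f_{\mathrm{in}}}(m)$ for all $m$ — with the same geometric factor for $\rho$ as for $\tau$. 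Requiring the unitary boundary gauges to preserve this relation for \emph{every} such $F$ and comparing Fourier coefficients (the mode-dependent factor $e^{-mL}$ prevents the gauges from mixing modes incompatibly) shows that, as $\operatorname{End}(M_n(\mathbb{C}))$-valued periodic functions, $\text{Ad}(\mathbf W^{\mathrm{in}})$ and $\text{Ad}(\mathbf W^{\mathrm{out}})$ have Fourier coefficients related by the \emph{same} factor $e^{-mL}$; hence they are the two boundary functions of a single holomorphic $\operatorname{End}(M_n(\mathbb{C}))$-valued function $\Xi$ on $S$, and the identity theorem forces $\Xi$ to take values in the automorphisms $PGL_n(\mathbb{C})$ of $M_n(\mathbb{C})$. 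Re-twisting by the entire factors $\text{Ad}(A_\tau^{w/2\pi})$ and $\text{Ad}(A_\rho^{-w/2\pi})$ produces $\Xi_0(w):=\text{Ad}(A_\tau^{w/2\pi})\circ\Xi(w)\circ\text{Ad}(A_\rho^{-w/2\pi})$, which is holomorphic on $S$, twisted-periodic with $\Xi_0(w+2\pi)=\text{Ad}(A_\tau)\circ\Xi_0(w)\circ\text{Ad}(A_\rho)^{-1}$, and — since on $C_{\mathrm{in}}$ one has $A_\tau^{w/2\pi}\mathbf W^{\mathrm{in}}(w)A_\rho^{-w/2\pi}=\mathbf U^{\mathrm{in}}(w)\in U_n(\mathbb{C})$ (the raw boundary gauge), and analogously on $C_{\mathrm{out}}$ — takes values in $PU_n(\mathbb{C})$ on \emph{both} boundary lines of $S$.

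Now $w\mapsto\log\|T(w)\|_{\operatorname{End}(M_n(\mathbb{C}))}$ is subharmonic for any holomorphic $\operatorname{End}(M_n(\mathbb{C}))$-valued $T$, so $\delta(w):=\log\|\Xi_0(w)\|+\log\|\Xi_0(w)^{-1}\|$ is subharmonic and $\ge 0$ on $S$; it vanishes on $\partial S$ (where $\Xi_0$ is unitary, hence an isometry of $M_n(\mathbb{C})$), and because $\text{Ad}(A_\rho),\text{Ad}(A_\tau)$ act isometrically on $M_n(\mathbb{C})$ the twisted periodicity makes $\delta$ descend to a subharmonic function on the relatively compact annulus $R$ vanishing on $\partial R$; by the maximum principle $\delta\equiv 0$, so $\Xi_0$ is $PU_n(\mathbb{C})$-valued. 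Since $PU_n(\mathbb{C})$ is a totally real submanifold of the complex Lie group $PGL_n(\mathbb{C})$ (its tangent spaces contain no complex line), a holomorphic map of the connected $S$ into it is constant, so $\Xi_0\equiv\text{Ad}(V)$ with $V\in U_n(\mathbb{C})$. The twisted periodicity then reads $\text{Ad}(V)=\text{Ad}(A_\tau)\,\text{Ad}(V)\,\text{Ad}(A_\rho)^{-1}$, i.e.\ $VA_\rho V^{-1}=\zeta A_\tau$ for a unimodular scalar $\zeta$, so $\text{Ad}(V)\,\text{Ad}(A_\rho)\,\text{Ad}(V)^{-1}=\text{Ad}(A_\tau)$. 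Tracing back, $\widetilde\Phi$ is on each boundary circle conjugation by the single constant $V$, so the constant — hence locally constant, i.e.\ \emph{flat} — $PU_n(\mathbb{C})$-gauge $\text{Ad}(V)$ implements a flat equivalence of $\mathcal B(\overline R,\rho)|_{\partial R}$ with $\mathcal B(\overline R,\tau)|_{\partial R}$ fixing $\partial R$ pointwise and conjugating the $\rho$-induced representation of $\Pi_1(\partial R)$ to the $\tau$-induced one; thus the bundles are restricted flat $PU_n(\mathbb{C})$ bundle equivalent in the sense (1) with base fixed pointwise.

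I expect the main obstacle to be the holomorphic-extension step: turning ``$\widetilde\Phi$ maps the holomorphic subalgebras onto one another'' into the statement that $\Xi_0$ is a single holomorphic $PGL_n(\mathbb{C})$-valued function with unitary boundary values on both circles requires the explicit Fourier/Laurent description of the boundary data of holomorphic sections over the annulus, carried out carefully in the de-twisting trivializations, together with the bookkeeping that realizes $\mathbf U^{\mathrm{in}}$ and $\mathbf U^{\mathrm{out}}$ back from $\mathbf W^{\mathrm{in}},\mathbf W^{\mathrm{out}}$. A secondary technical point, needed to apply the maximum principle, is the boundary regularity of $\Xi_0$: it is built from the Fourier series of a merely continuous gauge, so one works with nontangential boundary values and must verify that $\Xi_0$ lies in the relevant Hardy-type class on the strip — which follows from the exponential decay of one Fourier tail forced by the compatibility relation.
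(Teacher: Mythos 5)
Your argument is correct in its essentials, but it reaches the conclusion by a genuinely different route than the paper. Both proofs share the opening moves: pass to the $C^*$-envelopes $\Gamma_c(\partial R,\mathcal B|_{\partial R})$, invoke Fell and Tomiyama--Takesaki to realize the $*$-isomorphism as a continuous $PU_n(\mathbb{C})$-gauge over $\partial R$ covering the identity, and then exploit the fact that this gauge must carry (restrictions of) holomorphic sections to holomorphic sections. Where the paper diverges is in how it forces the gauge to be constant: it diagonalizes $A_\rho$ and $A_\tau$, tests the gauge against the explicit concomitants $F_{n,\mathbf D}$ and $F_{n,\mathbf C}$, and shows entry by entry that each product $\bm{\mu}_{jk}\overline{\bm{\mu}_{lm}}$ is a modulus automorphic function that is simultaneously the restriction of an analytic function and (after dividing out the exponential multiplier) real-valued or conjugate-analytic, hence constant. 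You instead de-twist by the entire factor $\mathrm{Ad}(A^{w/2\pi})$ so that \emph{all} of $A(\overline R)\otimes M_n(\mathbb{C})$ becomes available as test sections, extract from single Laurent modes the relation $\widehat{T}_{\mathrm{out}}(k)=e^{-kL}\widehat{T}_{\mathrm{in}}(k)$ between the two boundary gauges, glue them into one holomorphic $\mathrm{Aut}(M_n(\mathbb{C}))$-valued function $\Xi_0$ on the annulus with unitary boundary values, and finish with the subharmonicity of $\log\|\Xi_0\|+\log\|\Xi_0^{-1}\|$ plus the total reality of $PU_n(\mathbb{C})$ in $PGL_n(\mathbb{C})$. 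What your approach buys is a coordinate-free explanation of why the gauge is a \emph{single} constant on both boundary circles (it is the boundary value of one connected holomorphic map into a totally real target), avoiding the diagonalization and the entrywise bookkeeping; what it costs is the boundary-regularity work you flag (identifying $\Xi$ with the Dirichlet extension of the continuous gauges, which does go through since the mode-matching forces the harmonic extension to be holomorphic) and one expositional slip: under the holomorphic de-twisting, $\mathbf W^{\mathrm{out}}$ is \emph{not} honestly unitary on the outer line (the factor $A_\tau^{-w/2\pi}$ is only unitary for real $w$); unitarity is recovered only after re-twisting to $\Xi_0=\mathrm{Ad}(\mathbf U^{\mathrm{out}})$, which is what your maximum-principle step actually uses, so this does not damage the argument. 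Both proofs are annulus-specific in the middle step, and your totally-real rigidity would adapt to the commuting case of Section \ref{sec:GeneralizationToOtherDomains} about as readily as the paper's modulus-automorphic analysis.
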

For, suppose that $\Gamma_h(\overline{R}, \mathcal{B}(\overline{R}, \rho))$ is completely isometrically isomorphic to $\Gamma_h(\overline{R}, \mathcal{B}(\overline{R}, \tau))$ via an isomorphism $\Phi$, but that $\Phi$ doesn't necessarily fix the center pointwise. The isomorphism on sections will induce an isometric isomorphism on the center algebras $\phi : A(\overline{R}) \to A(\overline{R})$. An automorphism of $A(\overline{R})$ induces a bijective map $t: \overline{R} \to \overline{R}$ via $t:=\phi(z)$, and the functions $\phi$ and $\text{pre}_t: f \mapsto f\circ t$ agree on rational functions. By \cite[Cor.~4]{Kodama1965}, $A(\overline{R})$ is equal to the uniform closure of rational functions with poles off of $\overline{R}$, and so $\phi$ and $\text{pre}_t$ agree everywhere. 
Consider the new map $\sigma \mapsto \Phi(\sigma \circ t^{-1})$, which is an isomorphism that fixes $A(\overline{R})I_n$ pointwise, and then apply Prop \ref{mainthmmod}.

Thus from now on, by `flat equivalence' we always mean flat equivalence that fixes the base space $\overline{R}$ pointwise. \\

To prove Proposition \ref{mainthmmod}, it remains to show that if  $\Gamma_h(\overline{R},\mathcal{B}(\overline{R}, \rho))$ is completely isometrically isomorphic to $\Gamma_h(\overline{R},\mathcal{B}(\overline{R}, \tau))$ (where the isomorphism is identity on the centers), then $\mathcal{B}(\overline{R}, \rho)|_{\partial R}$ is restricted flat $PU_n(\mathbb{C})$ equivalent to $\mathcal{B}(\overline{R}, \tau)|_{\partial R}$. The outline of the proof is as follows:

 \begin{enumerate}[label=(\arabic*)]
	\item Compute several families of continuous holomorphic sections of $\mathcal{B}(\overline{R}, \rho)$. \label{proofstep1-computingsectionfamilites}
	\item Assume there is a complete isometric isomorphism of the continuous holomorphic section algebras. 
	\item Lift the complete isometric isomorphism to a $C^*$-isomomorphism of the $C^*$-envelopes which will restrict appropriately on the subalgebras. \label{proofstep3-liftisomorphism}
	\item Use results of Fell and Tomiyama and Takesaki to conclude that the $C^*$-isomorphism is induced by a topological $PU_n(\mathbb{C})$-bundle equivalence of the bundles $\mathcal{B}(\partial R, \rho)$ and $\mathcal{B}(\partial R, \tau)$.
	\item Explictly map the continuous holomorphic sections of $\mathcal{B}(\partial R, \rho)$ to sections of $\mathcal{B}(\partial R, \tau)$. \label{proofstep5-mapoversections}
	\item Use the previous item to conclude that the $PU_n(\mathbb{C})$ equivalence $(\lambda_U)_\mathcal{U}$ corresponds to an identically constant function on $\partial R$. \label{proofstep6-equivalenceisaconstant}
	\item Conclude $\rho$ is unitarily conjugate to $\tau$. \label{proofstep7-unitaryconjugacyend}
		\end{enumerate}

\subsection{Computing holomorphic sections} \label{section:constructingsections}

Recall from Section \ref{subsection:funcprelim} that, viewing $\widetilde{D}$ as the infinite strip, there is a one-to-one correspondence between continuous holomorphic concomitant functions $F: \widetilde{D} \to M_n(\mathbb{C})$ and sections $(\sigma_U : U \subseteq \overline{R} \to M_n(\mathbb{C}))_{U \in \mathcal{O}}$ for some open cover $\mathcal{O}$ of $\overline{R}$ and $U \in \mathcal{O}$, given by the maps:
\[ F \mapsto (\sigma_{F,U})_{U \in \mathcal{O}}\]
\[ (\sigma_U)_{U \in \mathcal{O}} \mapsto F_\sigma \]
where 
\[ \sigma_{F,U}(w)= F(\ln_U(w)) \text{ and}\]  
\[F_\sigma (z) = \sigma_U(e^{z}) \text{ whenever } e^z \in U.\] 
Thefore to produce examples of sections we will explicitly write down some concomitant functions, and then apply the first correspondence in the map above. We will also use the definition of a concomitant to make observations about its entries and the relationship with invariant and modulus automorphic functions described in \cite{Sarason1965}.

Fix a representation $\rho$ and a matrix $A \in U_n(\mathbb{C})$ such that $\rho(1)=\text{Ad}(A)$. If $A$ is also diagonal, we write $A=\text{diag}(a_1,\dots, a_n)$ for some $a_i \in \mathbb{T}$.  Given such a matrix, we may pick $K_{rs}$ to be a real number such that $e^{2\pi i K_{rs}} = \frac{a_{s}}{a_{r}}$ for $r < s$, $r,s\in[n]$, and $K_{rs} = K_{sr}$. This provides us with $\binom{n}{2}$ of these $K_{rs}$.

Next we will define two families of functions dependent on parameters $D_i, C_{jk} \in \mathbb{C}$, $i,j,k \in [n]$. These functions are clearly continuous on $\widetilde{D}$ and holomorphic on $D$: the constant function

 \begin{equation} \label{eqn:type1concomit} F_{n,\mathbf{D}}(z)=F_{D_1,D_2, \ldots, D_n}(z):=\begin{pmatrix} D_{1} & 0 & \cdots & 0 \\ 0 & D_2 & \ddots & 0 \\ \vdots & \ddots & \ddots & \vdots \\ 0 & \cdots &  0 & D_n\end{pmatrix}\end{equation} 
and the function $F_{n,\mathbf{C}}(z)\coloneqq (C_{jk}e^{\pm K_{jk}z})_{j,k \in [n]}$ where $C_{jk} \equiv 0$ if $j=k$ and $\pm$ is chosen to be $-$ for $j>k$ and $+$ for $j<k$, that is,

\begin{equation} \label{eqn:type2concomit} F_{n,\mathbf{C}}(z)= \begin{pmatrix} 
0 & C_{12}e^{K_{12}z} & \cdots & C_{1n}e^{K_{1n} z} \\ 
C_{21}e^{-K_{21}z} & 0 & \ddots & C_{2n}e^{K_{2n}z} \\ 
\vdots & \ddots & \ddots & \vdots \\ 
C_{n1}e^{-K_{n1}z} & \cdots & C_{n(n-1)}e^{-K_{n(n-1)}z} & 0
\end{pmatrix} .\end{equation} 

We can quickly verify that when $A$ is diagonal, the constant functions of type (\ref{eqn:type1concomit}) satisfy the concomitant condition with respect to $\rho$ if the matrix $A$ is diagonal. We can also verify that functions of type (\ref{eqn:type2concomit}) are also concomitants with respect to $\rho$ when $A$ is diagonal:

\begin{align*}
    F_{n,\textbf{C}}(z+2\pi i k ) & = \begin{pmatrix} 
        0 & C_{12}e^{K_{12}(z+2\pi i k) } & \cdots & C_{1n}e^{K_{1n}(z+2\pi ik) } \\ 
        C_{21}e^{-K_{21}(z+2\pi ik)} & 0 & \ddots & C_{2n}e^{K_{2n}(z+2\pi ik) } \\ 
        \vdots & \ddots & \ddots & \vdots \\ 
        C_{n1}e^{-K_{n1}(z+2\pi ik)} & \cdots & C_{n(n-1)}e^{-K_{n(n-1)}(z+2\pi ik) } & 0
        \end{pmatrix} \\
    & = \text{Diag}(a_i^k)
       \cdot \begin{pmatrix} 
        0 & C_{12}e^{-K_{12}z}  & \cdots & C_{1n}e^{-K_{1n}z} \\ 
        C_{21}e^{K_{21}z} & 0 & \ddots & C_{2n}e^{-K_{2n}z} \\ 
        \vdots & \ddots & \ddots & \vdots \\ 
         C_{n1}e^{K_{n1}z} & \cdots &  C_{n(n-1)}e^{K_{n(n-1)}z}  & 0
        \end{pmatrix} 
        \cdot \text{Diag}(a_i^{-k})
        \\
    & = A F_{n,\mathbf{C}}(z)A^{-k} = \rho(k) \cdot F_{n,\mathbf{C}}(z)
\end{align*}

By focusing on the entries of $F_{n,\textbf{D}}$ and $F_{n,\textbf{C}}$, the following observations can be made. First, the family $F_{n,\textbf{D}}$ could be extended more generally to any diagonal matrix of continuous holomorphic scalar-valued functions $D_i(z)$ which are invariant under the action of the fundamental group, namely that $D_i(z+2\pi i k) = D_i(z)$. Second, the entries of $F_{n, \mathbf{C}}(z)$ are examples of what Sarason calls \emph{modulus automorphic functions} \cite[I.3]{Sarason1965}: letting $f_{jk}(z):=C_{jk}e^{\pm K_{jk}z}$, for every $z \in \widetilde{D}$, $|f_{jk}(z)|=|f_{jk}(z+2\pi i k)|$; and moreover there is a constant $(\frac{a_k}{a_j})^{\pm 1}$ called the \emph{multiplier} for which $f_{jk}(z+2\pi i q)=(\frac{a_k}{a_j})^{\pm q} \, f_{jk}(z)$ for all $z \in \widetilde{D}, q \in \mathbb{Z}$. In fact, any modulus automorphic function can be written as the product of an invariant function with a power function \cite[pg.~12]{Sarason1965}. 

More generally, the following is true: let $\widetilde{D}$ be the universal covering space of a finitely, smoothly bordered planar domain $\overline{R}$. Suppose $\rho: \pi_1(\overline{R}) \to PU_n(\mathbb{C})$ is such that the image of the generators of $\pi_1(\overline{R})$ can be written as $\text{Ad}(A_i)$ where $A_i \in U_n(\mathbb{C})$ are diagonal. Then any entry of \emph{any} $\rho$-concomitant function is a modulus automorphic function. The diagonal entries will be invariant functions, i.e. modulus automorphic with multiplier 1. Note that this correspondence between concomitants and modulus automorphic functions does not follow through when the matrix $A$ is not diagonal.

\subsection{Mapping holomorphic sections, and conclusions about the bundle equivalence}

\begin{thm} \label{mainthm} Suppose that $\Gamma_h(\overline{R},\mathcal{B}(\overline{R},\rho))$ is completely isometrically isomorphic to $\Gamma_h(\overline{R},\mathcal{B}(\overline{R},\tau))$. Then $\mathcal{B}(\overline{R}, \rho)|_{\partial R}$ is restricted flat $PU_n(\mathbb{C})$ equivalent to $\mathcal{B}(\overline{R}, \tau)|_{\partial R}$.
    \end{thm}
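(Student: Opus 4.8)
The plan is to prove Theorem \ref{mainthm} by deducing it from Proposition \ref{mainthmmod}, which means the real work is to verify the seven-step outline listed after Proposition \ref{mainthmmod}. By the discussion immediately following that proposition, we may assume without loss of generality that the complete isometric isomorphism $\Phi\colon \Gamma_h(\overline{R},\mathcal{B}(\overline{R},\rho))\to \Gamma_h(\overline{R},\mathcal{B}(\overline{R},\tau))$ is the identity on the central copy of $A(\overline{R})I_n$; so it suffices to produce a restricted flat $PU_n(\mathbb{C})$ equivalence fixing $\partial R$ pointwise. First I would pass to $C^*$-envelopes: the $C^*$-envelope of $\Gamma_h(\overline{R},\mathcal{B}(\overline{R},\rho))$ should be identified with the full continuous section algebra $\Gamma_c(\partial R,\mathcal{B}(\overline{R},\rho)|_{\partial R})$ — the boundary appears because the norm on holomorphic sections is attained there, exactly as in the scalar case where the $C^*$-envelope of $A(\overline{R})$ is $C(\partial R)$. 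By the universal property of the $C^*$-envelope, the complete isometric isomorphism $\Phi$ extends to a $\ast$-isomorphism $\widehat\Phi$ of these $n$-homogeneous $C^*$-algebras, and because $\Phi$ fixes the center pointwise, $\widehat\Phi$ fixes $C(\partial R)I_n$ pointwise, hence is a $C(\partial R)$-algebra isomorphism.

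Next I would invoke the Fell–Tomiyama–Takesaki classification of $n$-homogeneous $C^*$-algebras: such a $C(\partial R)$-isomorphism of section algebras of $M_n(\mathbb{C})$-bundles over $\partial R$ is implemented by a topological $PU_n(\mathbb{C})$-bundle equivalence of $\mathcal{B}(\overline{R},\rho)|_{\partial R}$ with $\mathcal{B}(\overline{R},\tau)|_{\partial R}$ covering the identity on $\partial R$. Concretely, over a common refinement $\mathcal{U}$ of the open covers there are functions $\lambda_U\colon U\to PU_n(\mathbb{C})$ intertwining the transition functions, and on the universal cover this says that $\widehat\Phi$ acts on a concomitant $F$ by $F\mapsto \Lambda\cdot F$ for a suitable $PU_n(\mathbb{C})$-valued function $\Lambda$ on $\partial\widetilde D$ descending to $(\lambda_U)_{U\in\mathcal{U}}$, i.e.\ in terms of a locally-defined unitary lift, $F\mapsto \mathrm{Ad}(W)\circ F$ with $W$ continuous on $\partial R$ up to a central phase. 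The key point is that so far $\Lambda$ is merely continuous, not constant, so the equivalence is a plain topological one and not yet a \emph{flat} one; this is the gap between Theorem \ref{mainthm} as phrased with a topological restricted equivalence and the stronger flat statement needed for Proposition \ref{mainthmmod}.

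The crux — and the step I expect to be the main obstacle — is steps \ref{proofstep5-mapoversections} and \ref{proofstep6-equivalenceisaconstant}: upgrading this continuous bundle equivalence to a constant (flat) one. The idea is to use the fact that $\widehat\Phi$ restricts to a map $\Gamma_h(\overline{R},\mathcal{B}(\overline{R},\rho))\to\Gamma_h(\overline{R},\mathcal{B}(\overline{R},\tau))$ on the holomorphic subalgebras, i.e.\ $\mathrm{Ad}(W)$ must send continuous-holomorphic concomitants to continuous-holomorphic concomitants, and do so isometrically. I would feed in the explicit families of sections constructed in Section \ref{section:constructingsections}: the diagonal constant concomitants $F_{n,\mathbf D}$ and the off-diagonal power concomitants $F_{n,\mathbf C}$ (after a preliminary reduction, using that $\rho$ and $\tau$ can be taken to have diagonal representing matrices, since every element of $PU_n(\mathbb{C})$ of finite or infinite order coming from a unitary is diagonalizable — more precisely $A_\rho, A_\tau$ are unitaries hence unitarily diagonalizable, and the argument in Proposition \ref{propeasydirection} lets us conjugate to that normal form). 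Applying $\mathrm{Ad}(W(z))$ to these and demanding that the image again be a concomitant that is holomorphic on the interior forces strong constraints on the entries of $W(z)$: holomorphy of $\mathrm{Ad}(W)\cdot F_{n,\mathbf D}$ for \emph{all} diagonal constant $\mathbf D$ (which includes the central $f\,I_n$ that $W$ already fixes) pins down the "shape" of $W$ up to a permutation and diagonal, and then holomorphy applied to the power concomitants $F_{n,\mathbf C}$ — whose entries are genuinely multi-valued modulus-automorphic functions with nontrivial multipliers — forces the remaining freedom in $W(z)$ to be locally constant, because a non-constant holomorphic prefactor would destroy the precise exponential growth/multiplier structure. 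Once $W$ is locally constant, $\Lambda$ descends to a constant, so the equivalence is flat, and unwinding the intertwining relation $\tau(k)=\Lambda\,\rho(k)\,\Lambda^{-1}$ at the level of representing matrices shows $A_\rho$ and $A_\tau$ are unitarily conjugate; this is step \ref{proofstep7-unitaryconjugacyend} and, combined with the identity homeomorphism on $\partial R$, it yields precisely that $\mathcal{B}(\overline{R},\rho)|_{\partial R}$ is restricted flat $PU_n(\mathbb{C})$ equivalent to $\mathcal{B}(\overline{R},\tau)|_{\partial R}$, completing the proof. The delicate bookkeeping is in handling repeated eigenvalues of $A_\rho$ (where $PU_n$ allows extra block-unitary freedom) and in checking the holomorphy constraints simultaneously across all the test sections rather than one at a time.
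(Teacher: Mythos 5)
Your proposal follows essentially the same route as the paper: reduce to the center-fixing case via the remark after Proposition \ref{mainthmmod}, lift to the $C^*$-envelopes $\Gamma_c(\partial R,\mathcal{B}|_{\partial R})$, apply Tomiyama--Takesaki to obtain a continuous $PU_n(\mathbb{C})$-bundle equivalence $(\bm{\mu}_U)$ covering the identity, diagonalize the representing matrices, and test against the explicit families $F_{n,\mathbf{D}}$ and $F_{n,\mathbf{C}}$ to force the equivalence to be constant. The one place your sketch is looser than the paper is the constancy step: the operative mechanism is not that a non-constant prefactor would ``destroy the multiplier structure'' (multiplying by an invariant function preserves multipliers), but rather that the entry products $\bm{\mu}_{jk}\overline{\bm{\mu}_{lm}}$ are exhibited as boundary values of functions analytic on $D$ both as written and with the index pairs swapped --- so that the function and its complex conjugate are both analytic (and in the diagonal case $|\bm{\mu}_{jk}|^2$ is real-valued and analytic) --- which is what forces each product, and hence $\mathrm{Ad}(\bm{\mu}_U)$, to be constant.
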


\begin{proof} Let $\Phi : \Gamma_h(\overline{R},\mathcal{B}(\overline{R},\rho)) \to \Gamma_h(\overline{R},\mathcal{B}(\overline{R},\tau))$ be a complete isometric isomorphism that preserves the center pointwise. Then $\Phi$ lifts to a complete isometric isomorphism $\Psi$ of the $C^*$-envelopes; by \cite{McCormick19}, this means $\Psi : \Gamma_c(\partial R, \mathcal{B}(\overline{R}, \rho)|_{\partial R}) \to \Gamma_c(\partial R, \mathcal{B}(\overline{R}, \tau)|_{\partial R})$ where the image of $\Gamma_h(\overline{R},\mathcal{B}(\overline{R},\rho))$ in $\Gamma_c(\partial R, \mathcal{B}(\overline{R}, \rho)|_{\partial R})$ is the image of the restriction map.

By \cite{TomiyamaTakesaki61}, $\Psi$ is induced by a topological $PU_n(\mathbb{C})$-bundle equivalence of $\mathcal{B}(\overline{R}, \rho)|_{\partial R}$ with $\mathcal{B}(\overline{R}, \tau)|_{\partial R}$. Therefore, there exists an open cover $\mathcal{O}$ of $\partial R$ and family of continuous unitary-valued maps $(\bm{\mu}_U)_{U \in \mathcal{O}}$, $\bm{\mu}_U: \partial R \to U_n(\mathbb{C})$, such that the transition functions of the bundle $\mathcal{B}(\overline{R}, \tau)|_{\partial R}$ are given by conjugating the transition functions of $\mathcal{B}(\overline{R}, \rho)|_{\partial R}$ by the family $(\bm{\mu}_U)_{U \in \mathcal{O}}$. Thus we can rewrite $\Psi$: 
\[ \Psi((\sigma_U)_{U \in \mathcal{O}})= (\bm{\mu}_U\sigma_U \bm{\mu}^{-1}_U)_{U \in \mathcal{O}}  \]
for every section $(\sigma_U)_{U \in \mathcal{O}} \in \Gamma_c(\partial R,\mathcal{B}(\overline{R}, \rho)|_{\partial R})$. In particular, for every $(\sigma_U)_U$ that is the (restriction of) a continuous \emph{holomorphic} section  on $\mathcal{B}(\overline{R}, \rho)$, $(\bm{\mu}_U\sigma_U\bm{\mu}^{-1}_U)_{U \in \mathcal{O}}$ is (the restriction of) a continuous \emph{holomorphic} section in $\mathcal{B}(\overline{R}, \tau)$ . We can also apply the correspondence in Section \ref{section:constructingsections} to identify $(\sigma_U)_{U \in \mathcal{O}}$ and $(\bm{\mu}_U\sigma_U\bm{\mu}^{-1}_U)_{U \in \mathcal{O}}$ with concomitant functions with respect to the $\rho$- and $\tau$-induced actions.

The bundle $\mathcal{B}(\overline{R}, \tau)|_{\partial R}$ is determined by $\tau(1)=:\text{Ad}(D)$, $D \in U_n(\mathbb{C})$. Without loss of generality, we may assume that $D$ is diagonal: if not, then $D=PD' P^{-1}=PD'P^*$ for some diagonal $D'$ and unitary $P$, and $\Gamma_h(\overline{R}, \mathcal{B}(\overline{R}, \tau)|_{\partial R}) \simeq P \Gamma_h(\overline{R}, \mathcal{B}(\overline{R}, P^{*}\tau(\cdot)P)|_{\partial R}) P^{*}$. 

Let $\rho(1):=\text{Ad}(A), A\in U_n(\mathbb{C})$, and let $A=SA'S^*$ be a unitary diagonalization of $A$. Given a section $(\sigma'_U)_U \in \Gamma_c(\partial R, \mathcal{B}(\overline{R}, S^*\rho(\cdot)S)|_{\partial R})$, there is a section $(S^*\sigma'_US)_U \in \Gamma_c(\partial R, \mathcal{B}(\overline{R}, \rho)|_{\partial R})$, and every section in $\Gamma_c(\partial R, \mathcal{B}(\overline{R}, \rho)|_{\partial R})$ has this form.

Therefore, for every $(\sigma'_U)_U \in \Gamma_c(\partial R, \mathcal{B}(\overline{R}, S^{*}\rho(\cdot)S)|_{\partial R})$, 
\[ \Psi((S^*\sigma'_US)_{U \in \mathcal{O}})= (\bm{\mu}_US^*\sigma'_US \bm{\mu}^{-1}_U)_{U \in \mathcal{O}}  \]

Applying the correspondence between sections and concomitants, we have that for each section $(\sigma'_U)_U \in \Gamma_c(\partial R, \mathcal{B}(\overline{R}, S^{*}\rho(\cdot)S)|_{\partial R})$, the function
\begin{equation}\label{eqn:tau-concomitants} \bm{\mu}_U(e^z)S^*\sigma'_U(e^{z})S \bm{\mu}^{-1}_U(e^{z}) \text{ , } z \in \widetilde{\partial R} \end{equation}
is a concomitant with respect to the $\tau$-induced action.

Let's apply the results of Subsection \ref{section:constructingsections} to input several choices of $\sigma'$. The matrix $A'$ determining the flat bundle $\mathcal{B}(\overline{R}, S^*\rho(\cdot)S)|_{\partial R})$ is diagonal, so given $n$-tuples of complex numbers $\textbf{C}$ and $\textbf{D}$, we can construct $S^*\rho(\cdot)S$-concomitants $F_{n,\textbf{C}}$ and $F_{n,\textbf{D}}$ such that $F_{n,\textbf{D}}\circ \ln_U, F_{n,\textbf{C}}\circ \ln_U$ are in $ \Gamma_c(\partial R,\mathcal{B}(\overline{R}, S^*\rho(\cdot)S)|_{\partial R})$. Push forward the sections in (\ref{eqn:tau-concomitants}) by $\Psi$:
\begin{align*} \label{eqn:pushedforwardsections}
(\xi_{\textbf{C}})_U:=(\Psi\circ F_{n,\textbf{D}}\circ \ln_U)_{U \in \mathcal{O}} &= (\bm{\mu}_U(S^*(F_{n,\textbf{C}}\circ \ln_U)S) \bm{\mu}^{-1}_U)_{U \in \mathcal{O}}, \\
(\xi_{\textbf{D}})_U:=(\Psi \circ F_{n,\textbf{C}}\circ \ln_U)_{U \in \mathcal{O}} &= (\bm{\mu}_U(S^*(F_{n,\textbf{D}}\circ \ln_U)S) \bm{\mu}^{-1}_U)_{U \in \mathcal{O}},
    \end{align*}
which land as sections in $\Gamma_c(\partial R, \mathcal{B}(\overline{R}, \tau)|_{\partial R})$.

By previous observations at the end of Section \ref{section:constructingsections}, the cross-sections in $\Gamma_c(\partial R, \mathcal{B}(\overline{R}, \tau)|_{\partial R})$, being defined on a bundle with diagonal matrix $D$, correspond to concomitant functions $G_{\xi_\mathbf{C}} , G_{\xi_\mathbf{D}}$ whose entries are modulus automorphic functions. 
This completes step \ref{proofstep5-mapoversections} in our proof outline. 

Recall that the overall goal is to show the map that is conjugation by the family of unitaries
\begin{align*}
\boldsymbol{\bm{\mu}}_{U}(w)\coloneqq         \begin{pmatrix}\bm{\mu}_{11,U}(w)&\cdots&\bm{\mu}_{1(n-1),U}(w)&\bm{\mu}_{1n,U}(w)\\\vdots&\ddots&\bm{\mu}_{2(n-1),U}(w)&\bm{\mu}_{2n,U}(w)\\ \bm{\mu}_{(n-1)1,U}(w)&\cdots&\ddots&\vdots\\ \bm{\mu}_{n1,U}(w)&\cdots&\cdots&\bm{\mu}_{nn,U}(w)\end{pmatrix}.
\end{align*}

is locally constant, and moreover independent of $U$ so therefore pieces together to be a constant function with respect to the variable $w$. By a quick calculation we can see that a sufficient condition for conjugation by $\bm{\mu}_U$ being constant would be if (i) products of the form  $\bm{\mu}_{jk,U}(w)\overline{\bm{\mu}_{lm, U}(w)}$ are constant for all $j,k,l,m\in[n]\coloneqq\{1,2,\dots,n\}$ such that either $(j,k)=(l,m)$ or $k\neq m$, and (ii) these products are the same constant for all $U\in\mathcal{U}$.

For any fixed $n\in \mathbb{N}$, we have:

\begin{align*}
{\xi_\textbf{D}}_U &(w) = \bm{\mu}_{U}(w)F_{n,\mathbf{D}}(\ln_{U}{(w)})\bm{\mu}_{U}(w)^{*}\nonumber\\
& = \Bigg{(}\sum\limits_{\substack{q=1}}^{n}D_{q}\bm{\mu}_{jq, U}(w) \overline{\bm{\mu}_{kq,U}(w)} \Bigg{)}_{j,k\in[n]} \\
\text{and} & \\
{\xi_\textbf{C}}_U & (w) = \bm{\mu}_{U}(w)F_{n,\mathbf{C}}(\ln_{U}{(w)})\bm{\mu}_{U}(w)^{*}\nonumber\\
& = \Bigg{(}\sum\limits_{\substack{q=1\\m=1\\q< m}}^{n}C_{qm}\bm{\mu}_{jq, U}(w)\overline{\bm{\mu}_{km,U}(w)}e^{K_{qm}\text{ln}_{U}(w)} + \sum\limits_{\substack{q=1\\m=1\\ m < q}}^{n}C_{qm}\bm{\mu}_{jq,U}(w)\overline{\bm{\mu}_{km,U}(w)}e^{-K_{mq}\text{ln}_{U}(w)} \Bigg{)}_{j,k\in[n]}.\\
\end{align*}

Now, when we set $\textbf{D}= (0, \ldots, 0, D_k, 0, \ldots 0)$, we get that for each pair $j,k \in [n]$, the expression $(\bm{\mu}_{jk,U}(w)\overline{\bm{\mu}_{jk,U}(w)})_U = (|\bm{\mu}_{jk,U}(w)|^2)_U$ is the $(j,j)$ entry of ${\xi_\textbf{D}}_U(w)$. 
The correspondence between sections and concomitants gives us that this entry $|\bm{\mu}_{jk,U}(w)|^2$ defines a modulus automorphic function $ \tilde{f}_{jk} : \partial \widetilde{D} \to \mathbb{C}$. In particular, whenever $e^z \in U \cap V$, $|\bm{\mu}_{jk,U}(e^z)|^2 = |\bm{\mu}_{jk,V}(e^z)|^2 = \tilde{f}_{jk}(z)$ and the value of $|\bm{\mu}_{jk,U}(e^z)|^2$ is independent of $U$. 
Moreover, $\tilde{f}_{jk}$ is both real-valued on $\partial \widetilde{D}$ and the restriction of a function analytic on $D$. By a standard argument using the function theory on the universal cover $\widetilde{D}$, we conclude that $\bm{\mu}_{jk,U}(w)\overline{\bm{\mu}_{jk,U}(w)}=|\bm{\mu}_{jk}(w)|^2$ on $\partial R$ is constant.

We next apply a slightly more complex version of this argument to the case of $\bm{\mu}_{jk,U} \overline{\bm{\mu}_{lm,U}}$ where $k \neq m$. Let $A'\coloneqq \text{diag}(a_1,\dots, a_n)$ be the unitary matrix determining $S^*\rho(1)S$. We can then define the concomitant function $F_{n,\textbf{C}}(z)$ from section \ref{section:constructingsections}.

As in the previous case, we can map the section $F_{n,\mathbf{C}}(\ln_U(w))$ over to a section of the bundle $\mathcal{B}(\overline{R},\tau)|_{\partial R}$ using $\Psi$:

\begin{align*}
\Psi & (F_{n,\mathbf{C}}(\ln_{U}{(w)})) = \boldsymbol{\bm{\mu}}_{U}(w)F_{n,\mathbf{C}}(\ln_{U}{(w)})\boldsymbol{\bm{\mu}}_{U}(w)^{*}\nonumber\\
&  = \Bigg{(}\sum\limits_{\substack{q=1\\m=1\\q< m}}^{n}C_{qm}\bm{\mu}_{jq, U}(w)\overline{\bm{\mu}_{km,U}(w)}e^{K_{qm}\text{ln}_{U}(w)} + \sum\limits_{\substack{q=1\\m=1\\ m < q}}^{n}C_{qm}\bm{\mu}_{jq,U}(w)\overline{\bm{\mu}_{km,U}(w)}e^{-K_{mq}\text{ln}_{U}(w)} \Bigg{)}_{j,k\in[n]}
\end{align*}
For each choice of $(C_{qm})_{qm}$, the $(j,k)$ entry of the matrix in the above equation
corresponds to a modulus automorphic function on $\partial \widetilde{D}$. Consider the choices: 

\begin{alignat*}{2}
& C_{km}=1, k<m; C_{rs}=0 \text{ else}; \text{ entry } (j,l): &&\qquad \tilde{g}_{km}(z):= \bm{\mu}_{jk,U}(e^z)\overline{\bm{\mu}_{lm,U}(e^z)}e^{K_{km}z}\\
& C_{km}=1, k>m; C_{rs}=0 \text{ else}; \text{ entry } (j,l): &&\qquad \tilde{h}_{mk}(z):=\bm{\mu}_{jk,U}(e^z)\overline{\bm{\mu}_{lm,U}(e^z)}e^{-K_{mk}z}
\end{alignat*}

The functions $\tilde{g}_{km},\tilde{h}_{mk}$ also must be restrictions to $\partial \widetilde{D}$ of functions that are analytic on $D$. 

Now, $e^{\pm K_{ij}z}$ are already modulus automorphic functions, so the products $\tilde{g}_{km}(z)e^{-K_{km}z}$ and $\tilde{h}_{mk}(z)e^{K_{mk}z}$ are also modulus automorphic (with a different multiplier), and will still be restrictions of functions that are analytic on $D$. Thus for $k\neq m$, we still have that $\bm{\mu}_{jk,U}(e^z) \overline{\bm{\mu}_{lm,U}(e^z)}$ are analytic as well. Since the order of the pairs $(j,k)$ and $(l,m)$ doesn't matter, we also have that the conjugate function $\overline{\bm{\mu}_{jk,U}(e^z)} \bm{\mu}_{lm,U}(e^z)$ is also a modulus automorphic function that is the restriction of a function analytic on $D$. We can again use a standard argument on $\widetilde{D}$ to conclude that $\bm{\mu}_{jk}(w) \overline{\bm{\mu}_{lm}(w)}$ is a constant function on $\partial R$.
    \end{proof}

\begin{cor} The algebras $\Gamma_h(\overline{R},\mathcal{B}(\overline{R},\rho))$ over the annulus are classified, up to complete isometric isomorphism, by restricted flat bundle equivalence.
    \end{cor}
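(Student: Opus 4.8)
The plan is to read the corollary as the conjunction of the two implications already in hand: that restricted flat $PU_n(\mathbb{C})$ equivalence of the boundary-restricted bundles implies complete isometric isomorphism of the section algebras, and conversely. Concretely, I would show that the assignment sending $\mathcal{B}(\overline{R},\rho)$ to the restricted flat $PU_n(\mathbb{C})$ equivalence class of $\mathcal{B}(\overline{R},\rho)|_{\partial R}$ is a complete invariant for $\Gamma_h(\overline{R},\mathcal{B}(\overline{R},\rho))$ up to complete isometric isomorphism, i.e.\ that it separates exactly the algebras that fail to be completely isometrically isomorphic. The forward direction is Proposition \ref{propeasydirection}: if $\mathcal{B}(\overline{R},\rho)|_{\partial R}$ is restricted flat equivalent to $\mathcal{B}(\overline{R},\tau)|_{\partial R}$ in sense (2), that proposition constructs the complete isometric isomorphism $F_1 \mapsto V(F_1\circ\tilde\varphi)V^{-1}$ explicitly on the concomitant picture; in the stricter sense (1) it moreover fixes $A(\overline{R})I_n$ pointwise, though only the weaker conclusion is needed here.

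For the converse I would invoke Theorem \ref{mainthm} together with the reduction recorded after Proposition \ref{mainthmmod}. Given a complete isometric isomorphism $\Phi$ of the two section algebras, restricting $\Phi$ to the centers gives an isometric algebra automorphism $\phi$ of $A(\overline{R})$; by \cite{Kodama1965}, $A(\overline{R})$ is the uniform closure of rational functions with poles off $\overline{R}$, so $\phi$ is precomposition by a homeomorphism $t$ of $\overline{R}$ that is holomorphic on the interior. Twisting, $\sigma \mapsto \Phi(\sigma\circ t^{-1})$ is a complete isometric isomorphism fixing the center pointwise, and Proposition \ref{mainthmmod} (equivalently, the center-fixing case of Theorem \ref{mainthm}) yields a restricted flat $PU_n(\mathbb{C})$ equivalence of $\mathcal{B}(\overline{R},\rho)|_{\partial R}$ and $\mathcal{B}(\overline{R},\tau)|_{\partial R}$ fixing $\partial R$ pointwise; recomposing with $t$ upgrades this to an equivalence in sense (2) between the original bundles, which is exactly what is required.

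The genuine content, of course, lives inside Theorem \ref{mainthm}, and its crux is already carried out in the excerpt: lifting $\Phi$ to a $C^*$-isomorphism $\Psi$ of the $C^*$-envelopes $\Gamma_c(\partial R,\cdot)$, applying the Fell and Tomiyama--Takesaki classification of $n$-homogeneous $C^*$-algebras to realize $\Psi$ as conjugation by a continuous family of unitaries $(\bm{\mu}_U)_{U\in\mathcal{O}}$, and then showing that family is locally constant and glues to a single unitary. That last step is the real obstacle: after reducing $\rho(1)$ and $\tau(1)$ to diagonal form, one feeds the explicit concomitants $F_{n,\mathbf{D}}$ and $F_{n,\mathbf{C}}$ of Section \ref{section:constructingsections} through $\Psi$ and reads off that each product $\bm{\mu}_{jk,U}(w)\overline{\bm{\mu}_{lm,U}(w)}$ (with $(j,k)=(l,m)$ or $k\neq m$) together with its conjugate is simultaneously modulus automorphic and a boundary restriction of a function analytic on $D$, hence constant and independent of $U$ by the function theory on the universal cover $\widetilde{D}$ — which is precisely the sufficient condition for conjugation by $\bm{\mu}_U$ to be a single constant unitary, i.e.\ for $\rho$ and $\tau$ to be $PU_n(\mathbb{C})$-conjugate. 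For the corollary itself the only remaining bookkeeping is the reconciliation of senses (1) and (2) through the Kodama-theorem twist above, after which the forward and backward implications fit together into the stated classification.
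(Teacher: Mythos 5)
Your proposal is correct and follows exactly the route the paper intends: the corollary is simply the conjunction of Proposition \ref{propeasydirection} (restricted flat equivalence implies complete isometric isomorphism) with Theorem \ref{mainthm} (the converse, via the Kodama-theorem twist to reduce to the center-fixing case of Proposition \ref{mainthmmod}). Your summary of the internal mechanics of Theorem \ref{mainthm} also matches the paper's argument, so nothing further is needed.
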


\section{Generalization to other domains in the commuting case} \label{sec:GeneralizationToOtherDomains}

In our main theorem, $\overline{R}$ is an annulus in the complex plane. This mostly comes into play when analyzing the representation $\rho : \pi_1(\overline{R}) \to PU_n(\mathbb{C})$; for an annulus the fundamental group $\pi_1(\overline{R})$ is isomorphic to $\mathbb{Z}$, thus $\rho$ is determined by $\rho(1)$. Suppose instead we consider a more general finitely-bordered and smoothly-bordered planar domain $R$ with $b$ boundary components. We may still consider complex functions on $R$, and the algebras $A(\overline{R})$, $\Gamma_h(\overline{R},\mathcal{B}(\overline{R}))$, etc., like in \cite{McCormick19}.

For such a bordered planar domain, $\pi_1(\overline{R})$ is a free group on $b-1$ generators, $\mathbb{F}_{b-1}$. Therefore, $\rho$ will be determined by where $\rho$ sends the generators of $\mathbb{F}_{b-1}$. As before, given a choice of base points and paths one can build the $\rho$-induced representation of $\Pi_1(\partial R)$. 

Understanding when the $\rho$- and $\rho '$-induced representation are unitarily conjugate will be, in the most general case, a complex problem. However, this problem can be simplified if one assumes that the generators are sent to elements in $PU_n(\mathbb{C})$ that can be represented as conjugation by \emph{commuting} matrices $A_1, \ldots, A_{b-1} \in U_n(\mathbb{C})$. Here we will briefly sketch how one can rework the calculations in the previous section to treat this case.

First note that the universal covering space of $\overline{R}$ can still be viewed as a subset of the closed disk $\overline{D}$, which we continue to call $\widetilde{D}$. Moreover, the Lebesgue measure of the boundary inside the boundary circle is still $2\pi$ \cite[Ch.~XI]{Tsuji1975}. We will also still be able to apply the remark after Proposition \ref{mainthmmod}. 

Let us begin with Step 1 in our proof of Proposition \ref{mainthmmod}. Because of our commuting assumption, the matrices $A_1, \ldots, A_{b-1}$ can be simultaneously diagonalized by a unitary. Thus we may relate the sections of a bundle $\mathcal{B}(\overline{R},\rho)$ with (conjugated) sections of a bundle $\mathcal{B}(\overline{R}, \rho')$ where $\rho'$ takes the generators of $\pi_1(\overline{R})$ to diagonal unitaries $A_1', A_2', \ldots, A_{b-1}' $. 

To generate concomitants (and thus sections) with respect to $\mathcal{B}(\overline{R}, \rho')$ we need to consider analogues of the functions $F_{\bm{C}}, F_{\bm{D}}: \widetilde{D} \to M_n(\mathbb{C})$. Replace the invertible modulus automorphic functions built by the exponentials on the infinite strip by continuous holomorphic functions $f_i : \widetilde{D} \to \mathbb{C}$ satisfying $f(g_j \cdot z)= u_{ij} \cdot f(z)$ where $|u_{ij}|=1$ and the value of $u_{ij}$ depends on the eigenvalues of the representing matrix $A_j'$ for the generator $g_j \in \mathbb{F}_{b-1}$.

Next, we recall that even in the more general setting of a finitely bordered planar domain $\overline{R}$, we still may study the modulus automorphic functions \cite{AbrahamseDouglas76}. Making these adjustments, we may follow the proofs in Section \ref{section:results} through to conclude that bundles $\mathcal{B}(\overline{R},\rho)|_{\partial R}$ and $\mathcal{B}(\overline{R},\tau)|_{\partial R}$ coming from $\rho,\tau$ with commuting generator images are restricted flat equivalent if and only if the algebras $\Gamma_h(\overline{R},\mathcal{B}(\overline{R},\rho))$ and $\Gamma_h(\overline{R},\mathcal{B}(\overline{R},\tau))$ are completely isometrically isomorphic.

\section{Acknowledgements}
Part of this work began at the University of Iowa as a part of the second-named author's PhD thesis. The first-named author was supported by a California State University Long Beach student research assistantship fellowship made possible by Richard D.~Green funding during the summer and fall of 2021. 

\bibliography{refs.bib}
\bibliographystyle{plain}

\end{document}